\def\Ex{{\mathbb E}}
\def\Pr{{\mathbb P}}
\def\ve{\varepsilon}
\def\op{\mathrm{op}}
\def\HS{\mathrm{HS}}
\def\bfi{\mathbf{i}}
\newtheorem{thm}{Theorem}
\newtheorem{prop}[thm]{Proposition}
\newtheorem{cor}[thm]{Corollary}
\theoremstyle{definition}
\newtheorem{exm}{Example}
\theoremstyle{remark}
\newtheorem{rem}{Remark}
\title{Moment estimates for chaoses generated by symmetric random variables with logarithmically convex tails}
\author{Konrad Kolesko\thanks{The paper was prepared while the author held a post-doctoral position
at Warsaw Center of Mathematics and Computer Science. Research supported by the NCN grant DEC-2012/05/B/ST1/00692}
\and Rafa{\l} Lata{\l}a\thanks{Research supported by the NCN grant DEC-2012/05/B/ST1/00412}}
\date{}
\begin{document}

\maketitle

\begin{abstract}
We derive two-sided estimates for random multilinear forms (random chaoses) generated by independent
symmetric random variables with logarithmically concave tails. Estimates are exact up to multiplicative constants
depending only on the order of chaos.\\[1em]%
   {\footnotesize%
     \textbf{Keywords: }Polynomial chaoses; Tail and moment estimates; Logarithmically convex tails.\par%
     \noindent\textbf{AMS MSC 2010:}  60E15 }%
\end{abstract}

\section{Introduction and Main Results.}

In this paper we study homogeneous chaoses of order $d$, i.e.\ random variables of the form
\begin{equation}
\label{homchaos}
S=\sum_{i_1,\ldots,i_d=1}^n a_{i_1,\ldots,i_d}X_{i_1} \cdots X_{i_d},
\end{equation}
where $X_1,\ldots,X_n$ are independent random variables and $(a_{i_1,\ldots,i_d})$ is a multiindexed symmetric array of
real numbers such that $a_{i_1,\ldots,i_d}=0$ whenever $i_k=i_l$ for some $k\neq l$.

Chaoses of order $d=1$ are just sums of independent r.v's. There are numerous classical results 
providing bounds for moments and tails of $S$ in this case (such as Khintchine, Rosenthal, Bernstein, Hoeffding, Prokhorov, Bennett 
inequalities, to name a few). However the situation is more delicate if one looks for two-sided estimates. 
In \cite{La_mom} two-sided bounds for $L_p$-norms $\|S\|_p=(\Ex|S|^p)^{1/p}$ were found in quite general situation.
Namely, for any $p\geq 2$ and mean zero r.v's $X_i$ a deterministic function $f_p(a_1,\ldots,a_n)$ was constructed 
such that 
\[
\frac{1}{C}f_p(a_1,\ldots,a_n)\leq \left\|\sum_{i=1}^n a_iX_i\right\|_p\leq Cf_p(a_1,\ldots,a_n),
\]  
where $C$ is a universal constant. Strictly related question concerning two-sided bounds for tails of $|S|$ was 
treated in \cite{HMS}.

The case $d\geq 2$ is much less understood.
In \cite{La_chaos} two-sided estimates for moments of Gaussian chaoses were found. In \cite{AL} moment bounds were
established in the case when $d\leq 3$ and $X_i$ are symmetric with logarithmically concave tails and for
chaoses of arbitrary order generated by symmetric exponential r.v's. The main purpose
of this note is to study the case of symmetric variables with logarithmically convex tails, i.e.\ r.v's
such that functions $t\mapsto -\ln\Pr(|X_i|\geq t)$ are convex on $[0,\infty)$. The class of variables with logconvex
tails includes in particular variables with exponential and heavy-tailed Weibull distributions.

Observe that symmetric exponential random variables are in both classes (logarithmically concave and logarithmically 
convex tails) and methods described in this note allow to avoid many technical calculations  presented in  
\cite{AL} for this particular case (cf. Theorem \ref{thm:main} and Remark \ref{rem:3} with Theorem 3.4 in \cite{AL}). 

One of the main tools in the study of random chaoses is the decoupling technique (cf. the monograph \cite{dlPG} for its
various applications). It states that the asymptotic behaviour of the homogeneous chaos 
$S$ is the same as of its decoupled counterpart $\tilde{S}$ defined by the formula
\[
\tilde{S}:=\sum_{i_1,\ldots,i_d=1}^n a_{i_1,\ldots,i_d}X_{i_1}^1 \cdots X_{i_d}^d,
\]
where $(X_i^j)_{1\leq i\leq n}$, $j=1,\ldots,d$ are independent copies of the sequence $(X_i)_{i\leq n}$.
In particular the result of Kwapie\'n \cite{K} (see also \cite{dlPMS} for its more general version)
says that for any symmetric multiindexed matrix $(a_{i_1,\ldots,i_d})$ such that $a_{i_1,\ldots,i_d}=0$ 
whenever $i_k=i_l$ for some  $k\neq l$ and any $p\geq 1$,
\begin{equation}
\label{eq:decmom}
\frac{1}{C(d)}\|\tilde{S}\|_p\leq \|S\|_p\leq C(d)\|\tilde{S}\|_p,
\end{equation}
where $C(d)$ is a positive constant, which depends only on $d$. 

\smallskip

Before we state main results we need to introduce some notation. By $C$ (resp. $C(d)$) we denote positive universal constants 
(positive constants depending only on the parameter $d$).
In all cases values of constants may differ at each occurrence.
To simplify the notation we write $A\sim B$ ($A\sim_d B$) if $\frac{1}{C}A\leq B\leq CA$ ( $\frac{1}{C(d)}A\leq B\leq C(d)A$
resp.).
 
The $p$th moment of a random variable $X$ is $\|X\|_p^p=\Ex|X|^p$.

For $\bfi\in \{1,\ldots,n\}^d$ and $I\subset [d]:=\{1,\ldots,d\}$ we write $\bfi_I:=(i_k)_{k\in I}$. 
For $I\subset [d]$ by ${\cal P}(I)$ we denote the family of all partitions of $I$ into pairwise disjoint 
subsets. If ${\cal J}=\{I_1,\ldots,I_k\}\in {\cal P}(I)$ and $(a_{\bfi})$ is a multiindexed matrix we set
\[
\|(a_{\bfi})\|_{{\cal J}}=\|(a_{\bfi})_{{\bfi}_I}\|_{{\cal J}}
:=\sup\left\{\sum_{{\bfi_I}}a_{\bfi}\prod_{l=1}^kx_{\bfi_{I_l}}\colon 
\sum_{\bfi_{I_l}}x_{\bfi_{I_l}}^2\leq 1, 1\leq l\leq k \right\}.
\]

\smallskip

Now we are ready to state our main theorem.

\begin{thm}
\label{thm:main}
Let $(X_i^j)_{i\leq n,j\leq d}$ be independent symmetric r.v's with logarithmically convex tails such that 
$\Ex|X_i^j|^2=1$ for all $i,j$. 
For any multiindexed matrix $(a_{\bfi})$ and any $p\geq 2$ we have
\begin{align*}
\left\|\sum_{\bfi} a_{\bfi}X_{i_1}^1 \cdots X_{i_d}^d\right\|_p
\sim_d \left(\sum_{I\subset [d]}\sum_{\bfi_I}\sum_{{\cal J}\in {\cal P}(I^c)}p^{p|{\cal J}|/2}
\|(a_{\bfi})_{\bfi_{I^c}}\|_{{\cal J}}^p
\prod_{j\in I}\|X_{i_j}^j\|_p^p\right)^{1/p}
\\
\sim_d \sum_{I\subset [d]}\sum_{{\cal J}\in {\cal P}(I^c)}p^{|{\cal J}|/2}
\left(\sum_{\bfi_I}\|(a_{\bfi})_{\bfi_{I^c}}\|_{{\cal J}}^p \prod_{j\in I}\|X_{i_j}^j\|_p^p\right)^{1/p}.
\end{align*}
\end{thm}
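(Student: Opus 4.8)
The plan is to first dispose of the routine equivalence of the two displayed right-hand sides and then to establish the comparison with $\|S\|_p$ by induction on the order $d$. For the equivalence, observe that both expressions are built from the same quantities $b_{I,\mathcal J}:=p^{|\mathcal J|/2}\big(\sum_{\bfi_I}\|(a_{\bfi})_{\bfi_{I^c}}\|_{\mathcal J}^p\prod_{j\in I}\|X_{i_j}^j\|_p^p\big)^{1/p}$, the first being $\big(\sum_{I,\mathcal J}b_{I,\mathcal J}^p\big)^{1/p}$ and the second $\sum_{I,\mathcal J}b_{I,\mathcal J}$. Since the number $N(d)$ of pairs $(I,\mathcal J)$ with $I\subset[d]$, $\mathcal J\in\mathcal P(I^c)$ depends only on $d$, the inequalities $\|\cdot\|_{\ell^p}\le\|\cdot\|_{\ell^1}\le N(d)\|\cdot\|_{\ell^\infty}\le N(d)\|\cdot\|_{\ell^p}$ on $\er^{N(d)}$ give the equivalence with a constant depending only on $d$. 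It therefore suffices to compare $\|S\|_p$ with whichever of the two forms is more convenient.

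For the main comparison I would prove, by induction on $d$, the stronger statement in which the coefficients $a_{\bfi}$ take values in a Hilbert space $H$, with $\|S\|_p$ replaced by $(\Ex\|S\|_H^p)^{1/p}$ and the norms $\|\cdot\|_{\mathcal J}$ interpreted via $\|\cdot\|_H$. This generalisation is forced on us because the natural recursion produces square functions $(\sum_{i_d}Y_{i_d}^2)^{1/2}$, which are precisely $\ell^2$-norms of vector-valued chaoses of one lower order. The base case $d=1$ is the two-sided bound for sums of independent symmetric variables with logarithmically convex tails of \cite{La_mom}, in its $H$-valued form
\[
\Big\|\sum_i\tilde a_iX_i\Big\|_{p,H}\sim\Big(\sum_i\|\tilde a_i\|_H^2\Big)^{1/2}+\sqrt p\,\|(\tilde a_i)_i\|_{\op}+\Big(\sum_i\|\tilde a_i\|_H^p\|X_i\|_p^p\Big)^{1/p},
\]
where $\|(\tilde a_i)_i\|_{\op}$ is the norm of the operator $\ell^2\to H$, $e_i\mapsto\tilde a_i$. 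The three terms here are exactly the weights attached, respectively, to the coarsest partition (the ``mean''/Hilbert--Schmidt term), to the finest partition (the operator term, which carries the $\sqrt p$), and to the heavy-tail set $I\neq\emptyset$; it is this trichotomy, propagated through the induction, that the partitions $\mathcal J$ encode at higher orders.

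For the inductive step I would peel off the last coordinate, writing $S=\sum_{i_d}X_{i_d}^dY_{i_d}$ with $Y_{i_d}=\sum_{\bfi_{[d-1]}}a_{\bfi}X_{i_1}^1\cdots X_{i_{d-1}}^{d-1}$ a chaos of order $d-1$. Conditioning on $(X^j)_{j<d}$ and applying the one-dimensional bound in the variables $X^d$ splits $\|S\|_p^p$, up to $\sim_d$, into a heavy-tail part $\sum_{i_d}\|X_{i_d}^d\|_p^p\|Y_{i_d}\|_p^p$ and a square-function part $p^{p/2}\big\|(\sum_{i_d}Y_{i_d}^2)^{1/2}\big\|_p^p$. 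To the heavy-tail part I apply the scalar inductive hypothesis to each $Y_{i_d}$; summing against $\|X_{i_d}^d\|_p^p$ reproduces precisely the terms with $d\in I$. To the square-function part I apply the $H$-valued inductive hypothesis with $H=\ell^2$ indexed by $i_d$, after identifying $(\sum_{i_d}Y_{i_d}^2)^{1/2}$ with the $\ell^2$-norm of the vector-valued chaos $\sum_{\bfi_{[d-1]}}(a_{\bfi})_{i_d}X_{i_1}^1\cdots X_{i_{d-1}}^{d-1}$. Here the trichotomy of the base case distributes the peeled coordinate $i_d$ over every block of a partition of $I^c$: the operator-type terms keep $\{d\}$ as its own block and supply the extra factor $\sqrt p$ coming from the outer $p^{p/2}$, the mean/Hilbert--Schmidt-type terms merge $d$ into a coarser block (turning the corresponding Hilbert--Schmidt norm into a genuine $\|\cdot\|_{\mathcal J}$), and the heavy-tail-type terms in the $\ell^2$-norm account for $d\in I^c$ with some of the remaining coordinates heavy. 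The bookkeeping is then to verify that the $\ell^2$-valued norms so produced coincide with the scalar norms $\|(a_{\bfi})_{\bfi_{I^c}}\|_{\mathcal J}$ and that the powers of $p$ add up, so that summing over the cases $d\in I$ and $d\in I^c$ recovers all pairs $(I,\mathcal J)$.

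I expect the square-function step to be the main obstacle: keeping it tight requires the \emph{full} three-term estimate above, and in particular the mean term. Dropping or mishandling that term costs a spurious factor $\sqrt p$ and destroys the exactness, as one already sees at $d=2$, where the truth $\sqrt p\,\|a\|_{\HS}+p\,\|a\|_{\op}$ degrades to $p\,\|a\|_{\HS}$ under a crude bound. Logarithmic convexity of the tails enters in two ways: it underlies the base-case two-sided bound, and it provides the regularity of $p\mapsto\|X_i^j\|_p$ needed for the one-dimensional estimate, applied at the fixed level $p$, to compose across the induction without generating extraneous intermediate terms. Finally, for the lower bound I would not invert the recursion but exhibit each pair $(I,\mathcal J)$ directly: the heavy-tail terms by conditioning on the event that one variable $X_{i_j}^j$ with $j\in I$ takes a value of order $\|X_{i_j}^j\|_p$ while the rest are typical, and the partition terms by inserting the extremising contractions from the definition of $\|\cdot\|_{\mathcal J}$ and comparing with the corresponding Gaussian chaos through the central-limit content of the base case; taking the maximum over $(I,\mathcal J)$ and invoking the first paragraph then yields the full lower bound.
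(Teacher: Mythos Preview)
Your approach differs from the paper's, and as formulated it has a gap.

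The paper's route is considerably shorter. It keeps Corollary~\ref{cor_HMSO} in the form
\[
\Big\|\sum_ia_iX_i\Big\|_p\sim\Big(\sum_i|a_i|^p\|X_i\|_p^p\Big)^{1/p}+\Big\|\sum_ia_ig_i\Big\|_p,
\]
i.e.\ it does \emph{not} expand the Gaussian summand as $\sqrt p\,(\sum_ia_i^2)^{1/2}$. Iterating this conditionally---always in the scalar setting---replaces each $X^j$ either by a heavy-tail contribution or by an independent Gaussian $g^j$; this is the three-line induction of Proposition~\ref{prop:red_to_gauss}. The partition norms then appear all at once by invoking the Gaussian chaos bound of \cite{La_chaos} (Theorem~\ref{thm:gauss}). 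No Hilbert-space-valued step is ever needed, and both inequalities in the $\sim_d$ come simultaneously, so no separate lower-bound argument is required either.

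The gap in your plan is the $H$-valued inductive hypothesis. You phrase it as the scalar formula with each $\|\cdot\|_{{\cal J}}$ replaced by $\|\cdot\|_{{\cal J},H}$, but this is \emph{strictly weaker} than your own three-term base case (the mean term is redundant when $H=\er$ but essential for general $H$). Concretely, when $H=\ell^2_{i_d}$ one has $\|\cdot\|_{{\cal J}',\,\ell^2_{i_d}}=\|\cdot\|_{{\cal J}'\cup\{\{d\}\}}$, so your $H$-formula at level $d-1$ only recovers those partitions of $I^c\subset[d]$ in which $\{d\}$ is a singleton block. For $d=2$ the extra mean term of the base case supplies the missing $\{\{1,2\}\}$, and your computation there is correct; but for $d\ge3$ the hypothesis you wrote has no mechanism to produce, say, $\|a\|_{\{\{1,3\},\{2\}\}}$ or $\|a\|_{\{\{1,2,3\}\}}$. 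To close the induction you would have to carry, at every level, norms indexed by all partitions of $[d-1]\cup\{*\}$ (with $*$ standing for the $H$-slot), and establishing that enriched $H$-valued statement is essentially equivalent to reproving Theorem~\ref{thm:gauss} from scratch. The paper's device of keeping the auxiliary Gaussian variables intact throughout the recursion is exactly what lets it cite \cite{La_chaos} once instead.
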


\begin{rem}
Theorem \ref{thm:main} (and \eqref{eq:decmom}) may be used to obtain two-sided estimates for 
moments of nonhomogeneous chaoses of the form
\[
S=\sum_{j=0}^d\sum_{i_1,\ldots,i_j=1}^n a_{i_1,\ldots,i_j}^jX_{i_1} \cdots X_{i_j},
\]
where $a_{\emptyset}^0$ is a number and $(a_{i_1,\ldots,i_j}^j)_{i_1,\ldots,i_j}$, $j=1,\ldots,d$ are 
multiindexed matrices such that
$a_{i_1,\ldots,i_j}^j=0$ if $i_k=i_l$ for some $1\leq k<l\leq j$.
Indeed, in this case we have 
(cf. \cite[Lemma 2]{K} or \cite[Proposition 1.2]{AL})
\[
\frac{1}{C(d)}\sum_{j=0}^d\left\|\sum_{i_1,\ldots,i_j=1}^n a_{i_1,\ldots,i_j}^j X_{i_1} \cdots X_{i_j}\right\|_p
\leq \|S\|_p\leq \sum_{j=0}^d\left\|\sum_{i_1,\ldots,i_j=1}^n a_{i_1,\ldots,i_j}^j X_{i_1} \cdots X_{i_j}\right\|_p.
\]
\end{rem}

\begin{rem}
Theorem \ref{thm:main} yields also tail bounds for chaoses based on symmetric r.v's with logarithmically convex
tails. Obviously we have $\Pr(|S|\geq e\|S\|_p)\leq e^{-p}$. Moreover, if $\|S\|_{2p}\leq \lambda \|S\|_p$ 
for all $p\geq 2$
then by the Paley-Zygmund inequality it is not hard to show that $\Pr(|S|\geq \|S\|_p/C(\lambda))\geq e^{-p}$
for $p\geq p(\lambda)$. Observe also that if $\|X_i\|_{2p}\leq \mu\|X_i\|_p$ for all $i$ then, 
by Theorem \ref{thm:main}, $\|S\|_{2p}\leq C(d)\mu^d\|S\|_p$. 
\end{rem}

\begin{rem}
\label{rem:3}
Suppose that functions $p\mapsto \|X_i^j\|_p$ grow at most exponentially, i.e\ there exist constants
$\alpha,\beta$ such that
\[
\|X_i^j\|_{p}\leq \alpha e^{\beta p} \quad\mbox{ for all }i\leq n,\ j\leq d \mbox{ and }p\geq 2.
\]
Then for $p\geq 2$,
\begin{align*}
\sum_{I\subset [d]}\sum_{{\cal J}\in {\cal P}(I^c)}p^{|{\cal J}|/2}
&\left(\sum_{\bfi_I}\|(a_{\bfi})_{\bfi_{I^c}}\|_{{\cal J}}^p \prod_{j\in I}\|X_{i_j}^j\|_p^p\right)^{1/p}
\\
&~\sim_{d,\alpha,\beta}
\sum_{I\subset [d]}\sum_{{\cal J}\in {\cal P}(I^c)}p^{|{\cal J}|/2}
\max_{\bfi_I}\|(a_{\bfi})_{\bfi_{I^c}}\|_{{\cal J}} \prod_{j\in I}\|X_{i_j}^j\|_p.
\end{align*}

To see this, observe first that for any $\gamma\geq 0$ and $p\geq 3$, 
\begin{align*}
\|x\|_{\ell^p}
&\leq \|x\|_{\ell^2}^{2/ p}\|x\|_{\ell^{\infty}}^{1-2/p}
= \left\|e^{-\gamma p}x\right\|_{\ell^2}^{2/ p}\left\|e^{\frac {2\gamma p}{p-2}}x\right\|_{\ell^{\infty}}^{1-2/p}
\\
&\leq \frac{2}{p}\left\|e^{-\gamma p}x\right\|_{\ell^2}+
\frac{p-2}{p}\left\|e^{\frac {2\gamma p}{p-2}}x\right\|_{\ell^{\infty}}
\leq 
e^{-\gamma p}\|x\|_{\ell^2}+e^{6\gamma}\|x\|_{\ell^{\infty}}.
\end{align*}
Therefore 
\begin{equation}
\label{eq:interpol}
\|x\|_{\ell^p}\leq C(\gamma)\left(e^{-\gamma p}\|x\|_{\ell^2} +\|x\|_{\ell^{\infty}}\right)
\quad \mbox{for }p\geq 2. 
\end{equation}

Fix $I\subset [d]$ and ${\cal J}\in {\cal P}(I^c)$. We have $p^{|{\cal J}|/2}e^{-p}\leq p^{d/2}e^{-p}\leq C(d)$ and
\[
\sum_{\bfi_I}\left(\|(a_{\bfi})_{\bfi_{I^c}}\|_{{\cal J}}\prod_{j\in I}\|X_{i_j}^j\|_p\right)^2\leq 
\sum_{\bfi_I}\alpha^{2|I|}e^{2\beta p|I|}\sum_{\bfi_{I^c}}a_{\bfi}^2=
\alpha^{2|I|}e^{2\beta p|I|}\|(a_{\bfi})\|_{\{[d]\}}^2.
\]
Hence \eqref{eq:interpol}, applied with $\gamma=d\beta+1$, yields 
\begin{align*}
p^{|{\cal J}|/2}\left(\sum_{\bfi_I}\|(a_{\bfi})_{\bfi_{I^c}}\|_{{\cal J}}^p \prod_{j\in I}\|X_{i_j}\|_p^p\right)^{1/p}
&
\\
\leq C(d,\alpha,\beta)
&\left(\|(a_{\bfi})\|_{\{[d]\}}+p^{|{\cal J}|/2} 
\max_{\bfi_I}\|(a_{\bfi})_{\bfi_{I^c}}\|_{{\cal J}}\prod_{j\in I}\|X_{i_j}\|_p\right).
\end{align*}
\end{rem}

\begin{exm}
Observe that $\|(a_{ij})\|_{\{1,2\}}$ is the Hilbert-Schmidt and $\|(a_{ij})\|_{\{1\},\{2\}}$
the operator norm of a matrix $(a_{ij})$. Thus \eqref{eq:decmom} and Theorem \ref{thm:main}  yield
the following two-sided estimate for chaoses of order two ($(a_{ij})$ is a symmetric matrix with zero diagonal),
\begin{align*}
\left\|\sum_{i,j=1}^na_{i,j}X_iX_j\right\|_p
&\sim \left\|\sum_{i,j=1}^na_{i,j}X_i^1X_j^2\right\|_p
\\
&\sim
p\|(a_{i,j})\|_{\op}+p^{1/2}\|(a_{i,j})\|_{\HS}
+p^{1/2}\left(\sum_{i}\|X_i\|^p\left(\sum_j a_{ij}^2\right)^{p/2}\right)^{1/p}
\\
&\phantom{\sim}
+\left(\sum_{i,j}|a_{i,j}|^p\|X_i\|_p^p\|X_j\|_p^p\right)^{1/p}.
\end{align*}
\end{exm}

\begin{exm}
If $X_i$ have symmetric exponential distribution with the density $\frac{1}{2}e^{-|x|}$ then
$\|X_i\|_p=\Gamma(p+1)^{1/p}\sim p$ and we obtain by Theorem \ref{thm:main} and Remark \ref{rem:3}
\begin{align*}
\left\|\sum_{\bfi} a_{\bfi}X_{i_1}^1 \cdots X_{i_d}^d\right\|_p
&\sim_d \sum_{I\subset [d]}\sum_{{\cal J}\in {\cal P}(I^c)}p^{|I|+|{\cal J}|/2}
\left(\sum_{\bfi_I}\|(a_{\bfi})_{\bfi_{I^c}}\|_{{\cal J}}^p\right)^{1/p}
\\
&\sim_d \sum_{I\subset [d]}\sum_{{\cal J}\in {\cal P}(I^c)}p^{|I|+|{\cal J}|/2}
\max_{\bfi_I}\|(a_{\bfi})_{\bfi_{I^c}}\|_{{\cal J}}.
\end{align*}
\end{exm}

\begin{exm}
If $X_i$ have symmetric Weibull distribution with scale parameter $1$ and shape parameter $r\in (0,1]$,
i.e. $\Pr(|X_i|\geq t)=\exp(-t^r)$ for $t\geq 0$ then $\|X_i\|_p=\Gamma(p/r+1)^{1/p}$ and
\[
\left\|\sum_{\bfi} a_{\bfi}X_{i_1}^1 \cdots X_{i_d}^d\right\|_p
\sim_d \sum_{I\subset [d]}\sum_{{\cal J}\in {\cal P}(I^c)}\Gamma\left(\frac{p}{r}+1\right)^{|I|}p^{|{\cal J}|/2}
\left(\sum_{\bfi_I}\|(a_{\bfi})_{\bfi_{I^c}}\|_{{\cal J}}^p\right)^{1/p}.
\]
However by Stirling's formula $\Gamma(p/r+1)^{1/p}\sim_r p^{1/r}$, hence 
\begin{align*}
\left\|\sum_{\bfi} a_{\bfi}X_{i_1}^1 \cdots X_{i_d}^d\right\|_p
&\sim_{d,r} \sum_{I\subset [d]}\sum_{{\cal J}\in {\cal P}(I^c)}p^{|I|/r+|{\cal J}|/2}
\left(\sum_{\bfi_I}\|(a_{\bfi})_{\bfi_{I^c}}\|_{{\cal J}}^p\right)^{1/p}
\\
&\sim_{d,r} \sum_{I\subset [d]}\sum_{{\cal J}\in {\cal P}(I^c)}p^{|I|/r+|{\cal J}|/2}
\max_{\bfi_I}\|(a_{\bfi})_{\bfi_{I^c}}\|_{{\cal J}},
\end{align*}
where the last estimate follows from Remark \ref{rem:3}.
\end{exm}

Theorem \ref{thm:main} may be used to derive upper moment and tail bounds for chaoses based on variables 
whose moments are dominated by moments of variables with logconvex tails. Here is a sample
result in such direction.

\begin{cor}
\label{cor:dombyWeibull}
Let $r\in (0,1]$, $A<\infty$ and suppose that $(X_i^j)_{i\leq n,j\leq d}$ are independent centred r.v's such that 
$\|X_i^j\|_p\leq Ap^{1/r}$ for all $i,j$ and $p\geq 2$. 
For any multiindexed matrix $(a_{\bfi})$ and any $p\geq 2$ we have
\begin{equation}
\label{eq:momdombyWeibull}
\left\|\sum_{\bfi} a_{\bfi}X_{i_1}^1 \cdots X_{i_d}^d\right\|_p 
\leq C(r,d)A^d
\sum_{I\subset [d]}\sum_{{\cal J}\in {\cal P}(I^c)}p^{|I|/r+|{\cal J}|/2}
\max_{\bfi_I}\|(a_{\bfi})_{\bfi_{I^c}}\|_{{\cal J}}.
\end{equation}
Moreover for $t>0$,
\begin{align}
\notag
\Pr&\left(\left|\sum_{\bfi} a_{\bfi}X_{i_1}^1 \cdots X_{i_d}^d\right|\geq t \right)
\\
\label{eq:taildombyWeibull}
&\phantom{aaaaaaaaaa}\leq
2\exp\left(-\min_{I\subset [d]}\min_{{\cal J}\in {\cal P}(I^c)}
\left(\frac{t}{C'(r,d)A^d\max_{\bfi_I}\|(a_{\bfi})_{\bfi_{I^c}}\|_{{\cal J}}}\right)^{\frac{2r}{2|I|+r|{\cal J}|}}\right).
\end{align}
\end{cor}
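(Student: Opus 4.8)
The plan is to deduce Corollary~\ref{cor:dombyWeibull} from Theorem~\ref{thm:main} by a comparison-of-moments argument, reducing the given variables to genuine symmetric Weibull variables, and then to convert the resulting moment bound into a tail bound by optimizing over $p$.

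First I would reduce to symmetric variables. Since $\|X_i^j\|_p \le A p^{1/r}$ and the $X_i^j$ are centred, a standard symmetrization inequality (each $X_i^j$ has the same moment growth as its symmetrization $\varepsilon_i^j X_i^j$ up to universal constants) lets me assume the $X_i^j$ are symmetric at the cost of a constant; I would invoke this at the level of $L_p$ norms of the chaos. Next, let $Y_i^j$ be independent symmetric Weibull variables with shape $r$, so that $\|Y_i^j\|_p = \Gamma(p/r+1)^{1/p} \sim_r p^{1/r}$ by Stirling, and in particular $\|Y_i^j\|_2 \sim_r 1$. After rescaling the $Y_i^j$ to have $\Ex|Y_i^j|^2 = 1$, a constant $c = c(r)$ satisfies $\|X_i^j\|_p \le c(r) A \|Y_i^j\|_p$ for all $p \ge 2$. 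The key step is that domination of all moments of the individual factors transfers to domination of moments of the decoupled chaos: conditioning coordinate by coordinate and using that replacing one factor $X_{i_k}^k$ by $c(r)A\,Y_{i_k}^k$ can only increase the conditional $L_p$ norm (this is the contraction principle / comparison available for decoupled products, cf.\ the approach behind \eqref{eq:decmom}), I obtain
\[
\left\|\sum_{\bfi} a_{\bfi}X_{i_1}^1 \cdots X_{i_d}^d\right\|_p
\le C(d)\,(c(r)A)^d \left\|\sum_{\bfi} a_{\bfi}Y_{i_1}^1 \cdots Y_{i_d}^d\right\|_p.
\]
Applying Theorem~\ref{thm:main} (in fact the Weibull computation of Example~3) to the right-hand side, together with Remark~\ref{rem:3} to pass from the $\ell^p$-sum over $\bfi_I$ to the $\max_{\bfi_I}$, yields exactly \eqref{eq:momdombyWeibull} with the factor $A^d$ and a constant $C(r,d)$.

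For the tail bound I would use the standard Chebyshev-type device $\Pr(|S|\ge t) \le e^{-p}(e\|S\|_p/t)^p$, or more simply optimize $\Pr(|S|\ge t)\le (\|S\|_p/t)^p$ over $p$. Write $M = C(r,d)A^d$ and, for each pair $(I,{\cal J})$, set $b_{I,{\cal J}} = \max_{\bfi_I}\|(a_{\bfi})_{\bfi_{I^c}}\|_{{\cal J}}$ and $\kappa_{I,{\cal J}} = |I|/r + |{\cal J}|/2$, so that \eqref{eq:momdombyWeibull} reads $\|S\|_p \le M \sum_{I,{\cal J}} p^{\kappa_{I,{\cal J}}} b_{I,{\cal J}}$. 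Since there are only $C(d)$ terms in the sum, if $t \ge 2e\,M\sum_{I,{\cal J}} p^{\kappa_{I,{\cal J}}} b_{I,{\cal J}}$ then $\Pr(|S|\ge t)\le 2^{-p}$; it therefore suffices to choose, for the optimal term, $p$ proportional to $(t/(M b_{I,{\cal J}}))^{1/\kappa_{I,{\cal J}}}$, which makes $p^{\kappa_{I,{\cal J}}} \sim t/(Mb_{I,{\cal J}})$ and forces the exponent. Noting $1/\kappa_{I,{\cal J}} = 2r/(2|I| + r|{\cal J}|)$, taking the minimum over $(I,{\cal J})$ (the dominant constraint), and absorbing the finitely many terms and the passage from $2^{-p}$ to $e^{-\min(\cdots)}$ into an adjusted constant $C'(r,d)$, gives \eqref{eq:taildombyWeibull} with the leading factor $2$.

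The main obstacle I anticipate is the comparison step for the decoupled chaos: one must justify that pointwise domination of the one-dimensional moments $\|X_i^j\|_p \le c\|Y_i^j\|_p$ propagates to the multilinear form. The clean way is to iterate over the $d$ tensor directions, at each stage conditioning on all other coordinates so that the chaos becomes a \emph{linear} form $\sum_{i}\big(\sum_{\bfi:\,i_k=i} a_{\bfi}\prod_{l\ne k} Z_{i_l}^l\big) X_i^k$ in the single sequence $(X_i^k)_i$, and then applying the one-dimensional comparison principle (valid for symmetric variables whose moments are dominated, via the contraction principle, after conditioning) to replace $X_i^k$ by $c(r)A\,Y_i^k$; iterating $d$ times accumulates the factor $(c(r)A)^d$. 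Care is needed because the comparison of linear forms under moment domination is itself a nontrivial fact, but for symmetric variables it follows from the integral representation of $L_p$ norms together with the contraction principle, and it is precisely the kind of one-dimensional input on which the decoupling framework of \eqref{eq:decmom} already relies.
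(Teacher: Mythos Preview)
Your approach is the paper's: symmetrize, compare coordinate-by-coordinate to symmetric Weibull variables, iterate over the $d$ tensor directions, apply Example~3 (i.e.\ Theorem~\ref{thm:main} plus Remark~\ref{rem:3}), then Chebyshev for the tail. The one place where you diverge is exactly the step you flag as the obstacle, and the paper handles it more simply than you propose. It first observes that it suffices to prove \eqref{eq:momdombyWeibull} for even integers $p=2l$ (since $\|S\|_p\le\|S\|_{2l}$ for the smallest even $2l\ge p$, while $2l\le 2p$ and the right-hand side at $2p$ is at most a $C(r,d)$ multiple of that at $p$). For $p=2l$ the one-dimensional comparison
\[
\Big\|\sum_i b_i\varepsilon_iX_i^j\Big\|_{2l}\le C(r)A\,\Big\|\sum_i b_iY_i^j\Big\|_{2l}
\]
is elementary: expanding the $2l$-th power, independence and symmetry kill all terms except products of even moments $\prod_i\Ex|X_i^j|^{2k_i}$, each appearing with a nonnegative coefficient, and each dominated by $(C(r)A)^{2l}\prod_i\Ex|Y_i^j|^{2k_i}$. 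This replaces your appeal to an ``integral representation of $L_p$ norms together with the contraction principle'', which as stated is not a standard route for transferring moment domination of summands to $L_p$-domination of sums at non-integer $p$; the even-integer reduction is what makes the comparison step trivial.
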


\section{Proofs}

The following result was established by Hitczenko, Montgomery-Smith and Oleszkiewicz \cite{HMSO} 
(see \cite[Example 3.3]{La_mom} for a simpler proof).

\begin{thm}
\label{thm_HMSO}
Let $X_i$ be independent symmetric r.v's with logarithmically convex tails. Then for any $p\geq 2$,
\[
\left\|\sum_{i=1}^nX_i\right\|_p\sim \left(\sum_{i=1}^n\Ex |X_i|^p\right)^{1/p}
+\sqrt{p}\left(\sum_{i=1}^n\Ex X_i^{2}\right)^{1/2}.
\]
\end{thm}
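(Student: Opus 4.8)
The plan is to prove the one-dimensional moment comparison in Theorem \ref{thm_HMSO} by establishing the two inequalities separately, with the lower bound being essentially elementary and the upper bound carrying the real content. For the lower bound, I would argue that each of the two terms on the right is individually dominated by the left. The term $\sqrt{p}\bigl(\sum_i \Ex X_i^2\bigr)^{1/2}$ is controlled because, after symmetrization, one may compare $\sum_i X_i$ with a Gaussian-type quantity: conditionally on $|X_i|$, the signs make $\sum_i X_i$ a sum of symmetric terms whose $L_p$ norm dominates $\sqrt{p}$ times its $L_2$ norm via Khintchine's inequality, and $\|\sum_i X_i\|_2 = (\sum_i \Ex X_i^2)^{1/2}$ by independence and mean zero. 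The term $(\sum_i \Ex|X_i|^p)^{1/p} = (\sum_i \|X_i\|_p^p)^{1/p}$ is controlled by a standard disjoint-support or contraction argument: because the $X_i$ are independent and symmetric, the $p$th moment of the sum is at least a constant multiple of the $p$th moment of its largest single summand in the appropriate averaged sense, which is exactly $\max_i \|X_i\|_p$ up to the $\ell^p$-aggregation; more carefully, one uses that $\|\sum_i X_i\|_p \geq c\,(\sum_i \|X_i\|_p^p)^{1/p}$ for independent symmetric variables, a known fact following from a comparison with a single variable on a random coordinate.

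The heart of the proof is the upper bound, and here the logarithmic convexity of the tails is indispensable. The plan is to exploit the defining property that $t \mapsto -\ln\Pr(|X_i| \geq t)$ is convex on $[0,\infty)$. The key structural consequence I would extract is a truncation decomposition: I split each $X_i$ at a level depending on $p$, writing $X_i = X_i\ind_{\{|X_i|\le M_i\}} + X_i\ind_{\{|X_i|> M_i\}}$ for a suitable threshold $M_i$ (for instance tied to $\|X_i\|_p$ or to the quantile at level $e^{-p}$), and then bound the contributions of the bounded part and the tail part separately. The bounded part is handled by a Bernstein/Bennett-type argument producing the Gaussian term $\sqrt{p}\,(\sum_i \Ex X_i^2)^{1/2}$, while the unbounded part is handled by brute force through $\|X_i\ind_{\{|X_i|>M_i\}}\|_p \le \|X_i\|_p$ and an $\ell^p$ triangle-type inequality that yields the $(\sum_i \Ex|X_i|^p)^{1/p}$ term. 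The logconvexity is precisely what guarantees that the tail beyond the truncation level decays fast enough that the sum of the unbounded parts does not produce any term larger than $(\sum_i \Ex|X_i|^p)^{1/p}$; equivalently, it gives a one-sided domination of the large-deviation regime by the worst single summand.

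The main obstacle I anticipate is calibrating the truncation level and proving the clean additivity of the two regimes. For general symmetric variables one only gets $\|\sum_i X_i\|_p \lesssim \sqrt{p}\,(\sum_i\Ex X_i^2)^{1/2} + (\sum_i\Ex|X_i|^p)^{1/p} \cdot (\text{correction})$, and the correction can blow up; it is exactly the convexity of $-\ln\Pr(|X_i|\ge t)$ that suppresses this correction to a universal constant. Concretely, logconvex tails entail a comparison of the form $\Pr(|X_i| \ge s+t) \le \Pr(|X_i|\ge s)\Pr(|X_i|\ge t)/\Pr(|X_i|\ge 0)$-flavoured superadditivity of $-\ln\Pr$, which in turn controls how the tails of sums build up. Rather than reprove this from scratch, I would lean on the cited simpler argument in \cite[Example 3.3]{La_mom}, whose strategy is to reduce the estimate to a one-dimensional moment formula and then verify that for logconvex tails the two competing terms genuinely dominate the $L_p$ norm. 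I expect that the cleanest route is to first prove the result for a model family (e.g.\ variables of the form $\ve_i Y_i$ with $Y_i$ having a prescribed logconvex tail) and then transfer to the general case by the tail-domination lemma, absorbing all remaining terms into the universal constant $C$.
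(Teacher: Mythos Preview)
The paper does not prove Theorem~\ref{thm_HMSO}; it is quoted as a known result of Hitczenko, Montgomery--Smith and Oleszkiewicz \cite{HMSO}, with a pointer to a shorter argument in \cite[Example~3.3]{La_mom}. There is therefore no proof in the paper to compare your proposal against.

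That said, your sketch contains a genuine gap in the lower bound. You claim that, conditionally on $(|X_i|)$, Khintchine's inequality gives $\bigl\|\sum_i \ve_i |X_i|\bigr\|_p \geq c\sqrt{p}\,\bigl(\sum_i |X_i|^2\bigr)^{1/2}$. This is false: for $p\geq 2$ the lower Khintchine constant is of order~$1$, not $\sqrt{p}$ (indeed $\|\sum_i \ve_i a_i\|_p\leq \sqrt{p}\,\|a\|_2$ is the \emph{upper} bound, and for a single summand the left side equals $|a_1|$ for every $p$). The factor $\sqrt{p}$ in the lower bound cannot be extracted from the signs alone; it has to come from the log-convexity of the tails, which forces each $X_i$ to dominate, in the sense of tails or moments, a symmetric exponential variable of comparable variance. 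This is how the lower bound is obtained in the cited references.

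Your discussion of the upper bound also reflects a reversed intuition: you write that log-convexity ``guarantees that the tail beyond the truncation level decays fast enough''. In this paper log-convex tails are \emph{heavier} than exponential (the running examples are Weibull with shape $r\le 1$); the reason the term $(\sum_i \Ex|X_i|^p)^{1/p}$ suffices is not rapid decay but rather that for such heavy-tailed summands the $L_p$-norm of the sum is governed by the largest single term. The arguments in \cite{HMSO} and \cite{La_mom} proceed by a direct moment comparison rather than a truncation-plus-Bernstein scheme.
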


Since $\|g\|_p\sim \sqrt{p}$ for $p\geq 2$ and standard normal ${\cal N}(0,1)$ r.v. $g$ we immediately get
the following corollary.

\begin{cor}
\label{cor_HMSO}
Let $X_i$ be independent symmetric r.v's with logarithmically convex tails and variance one and let $g_i$ be i.i.d.\
standard normal ${\cal N}(0,1)$ r.v's. Then for any scalars $a_i$,
\[
\left\|\sum_{i=1}^na_iX_i\right\|_p\sim \left(\sum_{i=1}^n|a_i|^p\Ex |X_i|^p\right)^{1/p}+
\left\|\sum_{i=1}^na_ig_i\right\|_p.
\]
\end{cor}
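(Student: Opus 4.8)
The plan is to apply Theorem~\ref{thm_HMSO} directly to the rescaled sequence $(a_iX_i)_i$ and then to reinterpret the two resulting terms. First I would check that the hypotheses of Theorem~\ref{thm_HMSO} survive the scaling, i.e.\ that $(a_iX_i)_i$ is again a sequence of independent symmetric random variables with logarithmically convex tails. Independence and symmetry are obviously preserved, and for $a_i\neq 0$ one has $-\ln\Pr(|a_iX_i|\geq t)=-\ln\Pr(|X_i|\geq t/|a_i|)$, which is convex in $t$ as the composition of the convex function $s\mapsto-\ln\Pr(|X_i|\geq s)$ with the linear map $t\mapsto t/|a_i|$; the vanishing coefficients $a_i=0$ contribute nothing and can be discarded.

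Granting this, Theorem~\ref{thm_HMSO} applied to $(a_iX_i)_i$ yields
\[
\left\|\sum_{i=1}^na_iX_i\right\|_p\sim\left(\sum_{i=1}^n\Ex|a_iX_i|^p\right)^{1/p}+\sqrt{p}\left(\sum_{i=1}^n\Ex(a_iX_i)^2\right)^{1/2}.
\]
Here the first term equals $\left(\sum_i|a_i|^p\Ex|X_i|^p\right)^{1/p}$, and the variance-one normalisation $\Ex X_i^2=1$ collapses the second term to $\sqrt{p}\left(\sum_i a_i^2\right)^{1/2}$.

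The remaining step is to recognise $\sqrt{p}\left(\sum_i a_i^2\right)^{1/2}$ as the Gaussian $L_p$-norm $\|\sum_i a_ig_i\|_p$ up to a universal constant. Since the $g_i$ are independent centred Gaussians, $\sum_i a_ig_i$ is itself Gaussian with variance $\sum_i a_i^2$, so $\|\sum_i a_ig_i\|_p=\|g\|_p\left(\sum_i a_i^2\right)^{1/2}$ for a standard normal $g$. Invoking the fact $\|g\|_p\sim\sqrt{p}$ for $p\geq 2$ recalled just before the statement, I would conclude that $\|\sum_i a_ig_i\|_p\sim\sqrt{p}\left(\sum_i a_i^2\right)^{1/2}$, which matches the second term above and finishes the argument.

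I do not expect a genuine obstacle here: the corollary is essentially a reformulation of Theorem~\ref{thm_HMSO} combined with the two-sided Gaussian bound $\|g\|_p\sim\sqrt{p}$. The only point deserving a line of justification is the stability of the logarithmically convex tail condition under multiplication by a scalar, which, as noted above, is immediate from the composition-with-a-linear-map observation.
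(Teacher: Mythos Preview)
Your argument is correct and matches the paper's approach: the corollary is deduced immediately from Theorem~\ref{thm_HMSO} applied to the scaled variables $a_iX_i$, together with the Gaussian moment bound $\|g\|_p\sim\sqrt{p}$ stated just before the corollary. The only detail you add beyond the paper's one-line justification is the (routine) verification that scaling preserves log-convexity of tails, which is a welcome clarification but not a different method.
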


\begin{rem}
\label{rem:0}
Both Theorem \ref{thm_HMSO} and Corollary \ref{cor_HMSO} may be viewed as some versions of the Rosenthal inequality.
Note however that, contrary to Rosenthal's bound, provided estimates are sharp up to universal constants
that do not depend on $p$.  
\end{rem}

\begin{prop}
\label{prop:red_to_gauss}
Let $(X_i^j)_{i\leq n,j\leq d}$ be as in Theorem \ref{thm:main} and $(g_i^j)_{i\leq n,j\leq d}$ be i.i.d.\
standard normal ${\cal N}(0,1)$ r.v's. For any multiindexed matrix $(a_{\bfi})$ and 
any $p\geq 2$ we have
\begin{equation}
\label{eq:red_to_gauss}
\left\|\sum_{\bfi} a_{\bfi}X_{i_1}^1 \cdots X_{i_d}^d\right\|_p
\sim_d \left(\sum_{I\subset [d]}\sum_{\bfi_I}\left\|\sum_{\bfi_{I^c}}a_{\bfi}\prod_{j\in I^c}g_{i_j}^j\right\|_p^p
\prod_{j\in I}\|X_{i_j}^j\|_p^p\right)^{1/p}.
\end{equation}
\end{prop}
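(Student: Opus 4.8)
The plan is to reduce the full chaos to a Gaussian chaos one coordinate at a time, peeling off the variables $X^j$ for $j=1,\dots,d$ in succession. The engine for each step is Corollary~\ref{cor_HMSO}, which compares a weighted sum $\sum_i a_i X_i$ to the pair $(\sum_i |a_i|^p \Ex|X_i|^p)^{1/p}$ and $\|\sum_i a_i g_i\|_p$. The idea is to view the chaos as a sum over the last index $i_d$ of the form $\sum_{i_d} Y_{i_d} X_{i_d}^d$, where $Y_{i_d} = \sum_{\bfi_{[d-1]}} a_{\bfi}\, X_{i_1}^1\cdots X_{i_{d-1}}^{d-1}$ are coefficients depending only on the other coordinates and are independent of the sequence $(X_i^d)_i$.

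First I would condition on all the variables except $(X_i^d)_i$, so that the $Y_{i_d}$ become fixed scalars. Applying Corollary~\ref{cor_HMSO} (in its conditional form, treating $Y_{i_d}$ as deterministic coefficients $a_i$) gives, up to universal constants,
\[
\Bigl\|\sum_{i_d} Y_{i_d} X_{i_d}^d\Bigr\|_p
\sim \Bigl(\sum_{i_d}|Y_{i_d}|^p \Ex|X_{i_d}^d|^p\Bigr)^{1/p}
+ \Bigl\|\sum_{i_d} Y_{i_d} g_{i_d}^d\Bigr\|_p,
\]
where $g^d$ is an independent standard Gaussian sequence. Taking the $L_p$-norm in the remaining variables and using that for fixed exponent $p$ the map $y\mapsto \|y\|_p$ and the operations $a\mapsto (\sum|a|^p\cdots)^{1/p}$ commute with taking $L_p$-norms (Fubini for the $p$th power), this splits the chaos into two pieces according to whether the index $d$ is placed in the ``heavy-tail'' set $I$ or left as a Gaussian coordinate in $I^c$. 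Iterating this over $j=d,d-1,\dots,1$ produces exactly the sum over subsets $I\subset[d]$: each $j\in I$ contributes the factor $\|X_{i_j}^j\|_p$ via the $(\sum|Y|^p\Ex|X|^p)^{1/p}$ term, while each $j\in I^c$ contributes a Gaussian coordinate $g^j_{i_j}$. The telescoping of the $\sim$ relations through $d$ steps is what turns the universal constant $C$ into a dimension-dependent constant $C(d)$.

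The main obstacle I anticipate is bookkeeping rather than a genuine analytic difficulty: one must verify that at each stage the ``coefficients'' $Y_{i_d}$ really are independent of the variables being summed (which holds because the $X_i^j$ are independent across $j$) and that the $\sim$ with constants not depending on $p$ may be raised to the $p$th power and summed/integrated without distorting the estimate. The subtlety is that $\Ex|Z|^p \sim \Ex|W|^p$ with a $p$-free constant implies $\|Z\|_p\sim\|W\|_p$ with the same constant, so applying Fubini and recombining the $L_p(\de\Pr)$ norms of the two pieces stays within constant factors uniformly in $p$; this is precisely why Corollary~\ref{cor_HMSO} is stated with a universal (not $p$-dependent) constant, and it is essential for the iteration. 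Care is also needed to check that the centring/symmetry hypotheses of Corollary~\ref{cor_HMSO} persist under conditioning, which they do because symmetry of each coordinate sequence is preserved when the others are frozen. Once these points are in place, collecting terms over all $2^d$ choices of $I$ yields \eqref{eq:red_to_gauss}.
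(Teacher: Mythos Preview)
Your proposal is correct and follows essentially the same approach as the paper: the paper organizes the argument as an induction on $d$, applying Corollary~\ref{cor_HMSO} conditionally to peel off $X^d$ and then invoking the induction hypothesis (conditionally on $g^d$ for the Gaussian branch), which is exactly your iterative peeling phrased inductively. Your discussion of the bookkeeping---independence across $j$, Fubini for $p$th powers, and the necessity of $p$-free constants for the iteration---matches the paper's implicit reasoning.
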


\begin{proof}
We proceed by induction with respect to $d$. For $d=1$ the estimate follows by Corollary \ref{cor_HMSO}.

To show the induction step assume that $d>1$ and the bound holds for $d-1$. By Corollary \ref{cor_HMSO} applied
conditionally we get
\begin{align}
\notag
\Big\|\sum_{\bfi} a_{\bfi}&X_{i_1}^1 \cdots X_{i_d}^d\Big\|_p
\\
\label{ind1}
&\sim
\left(\left\|\sum_{\bfi} a_{\bfi}X_{i_1}^1 \cdots X_{i_{d-1}}^{d-1}g_{i_d}^d\right\|_p^p
+\sum_{i_d}\left\|\sum_{\bfi_{[d-1]}} a_{\bfi}X_{i_1}^1 \cdots X_{i_{d-1}}^{d-1}\right\|_p^p\|X_{i_d}^d\|_p^p
\right)^{1/p}.
\end{align}
The conditional application of the induction assumption yields
\begin{equation}
\label{ind2}
\left\|\sum_{\bfi} a_{\bfi}X_{i_1}^1 \cdots X_{i_{d-1}}^{d-1}g_{i_d}^d\right\|_p
\sim_d \left(\sum_{I\subset [d-1]}\sum_{\bfi_I}\left\|\sum_{\bfi_{I^c}}a_{\bfi}\prod_{j\in I^c}g_{i_j}^j\right\|_p^p
\prod_{j\in I}\|X_{i_j}^j\|_p^p\right)^{1/p}
\end{equation}
and for any $i_d$,
\begin{equation}
\label{ind3}
\left\|\sum_{\bfi_{[d-1]}} a_{\bfi}X_{i_1}^1 \cdots X_{i_{d-1}}^{d-1}\right\|_p
\sim_d 
\left(\sum_{I\subset [d-1]}\sum_{\bfi_I}\left\|\sum_{\bfi_{[d-1]\setminus I}}a_{\bfi}\prod_{j\in [d-1]\setminus I}g_{i_j}^j\right\|_p^p
\prod_{j\in I}\|X_{i_j}^j\|_p^p\right)^{1/p}.
\end{equation}
Estimates \eqref{ind1}--\eqref{ind3} imply \eqref{eq:red_to_gauss}.
\end{proof}

Theorem \ref{thm:main} immediately follows by Proposition \ref{prop:red_to_gauss} and the following
two-sided bound for moments of Gaussian chaoses \cite{La_chaos}.

\begin{thm}
\label{thm:gauss}
For any $d$ and $p\geq 2$ we have
\[
\left\|\sum_{\bfi} a_{\bfi}g_{i_1}^1 \cdots g_{i_d}^d\right\|_p
\sim_d \sum_{{\cal J}\in {\cal P}([d])}p^{|{\cal J}|/2}\|(a_{\bfi})\|_{{\cal J}}.
\]
\end{thm}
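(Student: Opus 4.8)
The plan is to prove both inequalities simultaneously by induction on the order $d$, peeling off one copy of the Gaussian variables at each step. For $d=1$ the only partition of $[1]$ is $\{\{1\}\}$, and since $\sum_i a_i g^1_i$ has the law of $\|(a_i)\|_{\ell^2}\,g$ for a single standard Gaussian $g$, the elementary estimate $\|g\|_p\sim\sqrt p$ gives the claim. For the inductive step I would condition on $g^1,\dots,g^{d-1}$ and apply this scalar Gaussian estimate in the single remaining copy $g^d$: writing $Z_{i_d}:=\sum_{\bfi_{[d-1]}}a_{\bfi}\prod_{j<d}g^j_{i_j}$ one obtains the two-sided identity
\[
\Big\|\sum_{\bfi}a_{\bfi}g^1_{i_1}\cdots g^d_{i_d}\Big\|_p\sim\sqrt p\,\Big\|\Big(\sum_{i_d}Z_{i_d}^2\Big)^{1/2}\Big\|_p .
\]
Thus the scalar problem of order $d$ is reduced, with both bounds treated at once, to estimating the $p$-th moment of the $\ell^2$-norm of the vector $(Z_{i_d})_{i_d}$, a Hilbert-space-valued Gaussian chaos of order $d-1$. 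Because of this the induction cannot be run on scalar chaoses alone: one must carry along a Hilbert-space-valued version of the statement (equivalently, a tensor-structured one with a distinguished $\ell^2$-summed coordinate), with a correspondingly enlarged family of norms $\|(a_{\bfi})\|_{\mathcal J}$.

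The moment $\|(\sum_{i_d}Z_{i_d}^2)^{1/2}\|_p$ is controlled through the Gaussian-vector concentration principle, which for the order-$(d-1)$ chaos $G=(Z_{i_d})_{i_d}$ splits it into a bulk term of the size of $\Ex\|G\|_{\ell^2}$ together with $\sqrt p$ times a weak-variance term $\sup_u(\mathrm{Var}\langle u,G\rangle)^{1/2}$. This is where the combinatorics enters. The weak-variance term is again a chaos of order $d-1$ carrying an extra factor $\sqrt p$, and it corresponds exactly to the partitions of $[d]$ in which $\{d\}$ forms its own block; the bulk term $\Ex\|G\|_{\ell^2}$ carries one fewer power of $\sqrt p$ and, once expanded by the inductive hypothesis, corresponds to the partitions in which $d$ is absorbed into a block of size at least two. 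Every partition $\mathcal J\in\mathcal P([d])$ arises uniquely in one of these two ways from a partition of $[d-1]$, and tracking the factors $p^{|\mathcal J|/2}$ and the norms $\|(a_{\bfi})\|_{\mathcal J}$ through the split is what regenerates the full sum over $\mathcal P([d])$ from the sum over $\mathcal P([d-1])$. The case $d=2$, where the identity reads $\sqrt p\,(\Ex\|A^{\mathsf{T}}g\|_{\ell^2}+\sqrt p\,\|A\|_{\op})\sim\sqrt p\,\|A\|_{\HS}+p\,\|A\|_{\op}$, already exhibits this dichotomy.

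The main obstacle is the upper bound, and it is concentrated entirely in estimating the bulk term $\Ex\|G\|_{\ell^2}$ from above by the correct linear combination of partition norms. The lower direction of the vector-valued moment is comparatively soft, following from Gaussian concentration together with $\Ex\|G\|_{\ell^2}^2=\|(a_{\bfi})\|_{\{[d]\}}^2$ and the selection of near-extremal unit tensors in the definition of each $\|\cdot\|_{\mathcal J}$. For the upper estimate, however, the supremum defining $\|(a_{\bfi})\|_{\mathcal J}$ over a product of $\ell^2$-balls is not directly compatible with the $L^p$-norm, so a naive use of the induction hypothesis loses the correct powers of $p$. Overcoming this requires a chaining argument over the unit balls appearing in the norms, together with volumetric covering-number estimates that match the combinatorial weights $p^{|\mathcal J|/2}$; this is precisely the technical heart of \cite{La_chaos}. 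I expect this step, rather than the induction scheme, the two-sided reduction, or the lower bound, to be the decisive difficulty, with all implied constants depending only on $d$, as required.
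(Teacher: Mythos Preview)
The paper does not prove this theorem at all: it is quoted verbatim from \cite{La_chaos} and used as a black box to deduce Theorem~\ref{thm:main} from Proposition~\ref{prop:red_to_gauss}. So there is no ``paper's own proof'' to compare against; anything you write here is necessarily going beyond what the present paper does.

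As for the outline itself: the inductive scheme you describe is in the right spirit, and you correctly identify that the entire difficulty lives in the upper bound and is resolved by the chaining argument of \cite{La_chaos}. But as written this is a roadmap rather than a proof, and one step is stated in a misleading way. After conditioning on $g^1,\ldots,g^{d-1}$ and passing to the $\ell^2$-norm of $G=(Z_{i_d})_{i_d}$, the object $G$ is an order-$(d-1)$ Gaussian chaos, not a Gaussian vector, so the ``Gaussian-vector concentration principle'' you invoke (mean plus $\sqrt{p}$ times the weak variance) does not apply directly; concentration for chaoses of order $\geq 2$ involves the full hierarchy of partition norms, which is precisely the statement you are trying to prove. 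In \cite{La_chaos} this circularity is broken not by a bulk/weak-variance dichotomy but by a genuine metric-entropy argument on products of Euclidean balls, with the powers $p^{|\mathcal{J}|/2}$ arising from the volumetric estimates. Your last paragraph acknowledges this, but then the ``proof'' consists of reducing the theorem to the technical heart of its own reference, which is circular from the standpoint of this paper. The honest statement is simply that the result is due to \cite{La_chaos}, which is exactly what the paper says.
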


\begin{proof}[Proof of Corollary \ref{cor:dombyWeibull}]
Obviously it is enough to show \eqref{eq:momdombyWeibull} for $p=2l$, $l=1,2,\ldots$.
Let $Y_i^j$, $i\geq 1,j=1,\ldots,d$
be i.i.d.\ symmetric Weibull r.v's with scale parameter $1$ and shape parameter $r$ and 
let $(\ve_{i})$ be i.i.d.\ symmetric
$\pm 1$ r.v's, independent of the sequence $(X_i^j)$. We have (see Example 3) 
$\|X_i^j\|_{p}=\|\ve_iX_i^j\|_{p}\leq Ap^{1/r}\leq C(r)A\|Y_i^j\|_p$ for all $p\geq 2$. Thus, for any
positive integer $l$ and any scalars $(b_i)$,
\[
\left\|\sum_{i}b_iX_i^j\right\|_{2l}\leq 2\left\|\sum_{i}b_i\ve_iX_i^j\right\|_{2l}
\leq 2C(r)A\left\|\sum_{i}b_iY_i^j\right\|_{2l},
\]
where the first inequality follows by the standard symmetrization argument (cf.\ \cite[Lemma 1.2.6]{dlPG}).
Easy induction shows that
\[
\left\|\sum_{\bfi} a_{\bfi}X_{i_1}^1 \cdots X_{i_d}^d\right\|_{2l}
\leq (2C(r)A)^d \left\|\sum_{\bfi} a_{\bfi}Y_{i_1}^1 \cdots Y_{i_d}^d\right\|_{2l}
\]
and \eqref{eq:momdombyWeibull} (for $p=2l$) follows by Example 3. 

The tail bound \eqref{eq:taildombyWeibull} follows by \eqref{eq:momdombyWeibull} and Chebyshev's inequality.
\end{proof}

\noindent
{\sc Konrad Kolesko}\\
Instytut Matematyczny\\
Uniwersytet Wroc{\l}awski\\
Pl. Grunwaldzki 2/4\\
50-384 Wroc{\l}aw, Poland\\
\texttt{kolesko@math.uni.wroc.pl}

\medskip
\noindent
{\sc Rafa{\l} Lata{\l}a}\\
Institute of Mathematics\\
University of Warsaw\\
Banacha 2\\
02-097 Warszawa, Poland\\
\texttt{rlatala@mimuw.edu.pl}

\end{document}